\documentclass[11pt]{amsart}
\usepackage[colorlinks=true,linkcolor=blue,hypertexnames=false]{hyperref}

\usepackage{enumerate}
\usepackage{lineno}
\usepackage{graphicx}
\usepackage{geometry}
\usepackage{enumitem}
\makeatletter
\@namedef{subjclassname@}{%
\textup{}}
\makeatother
\newtheorem{thm}{Theorem}[section]

\newtheorem{lem}[thm]{Lemma}

\newtheorem*{conj*}{Denjoy's Conjecture}

\theoremstyle{definition}

\newtheorem{exa}[thm]{Example}
\numberwithin{equation}{section}

\begin{document}

\title[]{On questions posed by Gundersen and Yang concerning a certain binomial differential equation }

\author{  Xuxu Xiang, Jianren Long*}

\address{Xuxu Xiang \newline School of Mathematical Sciences, Guizhou Normal University, Guiyang, 550025, P.R. China. }
\email{1245410002@qq.com}

\address{Jianren Long \newline
School of Mathematical Science, Guizhou  Normal University, Guiyang, 550025,  China.}
\email{longjianren2004@163.com }


\date{}


\begin{abstract}
Suppose \(a, b, c\) are polynomials with \(ab \neq 0\) and \(c\) nonconstant. We determine all entire solutions to the equation
\begin{equation*}
f'f'' - a(z)f^2 = b(z)e^{2c(z)}.
\end{equation*}
This completely solves two open questions posed by Gundersen and Yang in [Comput. Methods Funct. Theory 21 (2021), 605-617].  
\end{abstract}



\keywords{ Nevanlinna theory; Entire solutions; Binomial differential equations; Nonlinear differential equations\\
2020 Mathematics Subject Classification: 34M10, 34M05, 30D35\\
*Corresponding author. \\
}
\maketitle
\section{Introduction and main results}\label{sec1}

Let $f$ be a meromorphic function in the complex plane $\mathbb{C}$. We assume    that readers are familiar   with the standard notation and basic results of Nevanlinna theory, 
such as the proximity function $m(r,f)$, the counting function $N(r,f)$, and the 
characteristic function $T(r,f)=m(r,f)+N(r,f)$, see \cite{c}  for more details. 
We denote by $\rho(f)$ 
the order of $f$ and by $\lambda(f)$ the exponent of convergence of the zeros of $f$.  $S(r,f)$ denotes any quantity such that if $\rho(f)<\infty$, then $S(r,f)=O(\log r)$; if $\rho(f)=\infty$, then $S(r,f)=O(\log(rT(r,f)))$ for all $r$ outside an exceptional set of finite logarithmic measure.

 In an important pape \cite{hayman}, Hayman conjectured: If $f$ is meromorphic in the plane and both $f$ and $f^{(k)}$ have no zeros for some $k \geq 2$, then either $f(z) = e^{az + b}$ or $f(z) = (az + b)^{-n}$, where $a$ and $b$ are constants and $n$ is a positive integer.
 This conjecture was proved for $k \geq 3$ by Frank \cite{frank} and for $k=2$ by Langley \cite{langley93}.
 Hayman observed that if $f$ has no zeros, then $f$ can be represented as $f = 1/g$, where $g$ is an entire function. Thus,
\[
f'' = \frac{2(g')^2 - gg''}{g^3},
\]
which leads to the question of under what conditions the expression $2(g')^2 - gg''$ has no zeros. 
Inspired by this, Mues \cite{mues78} determined the explicit forms of entire function $g$ for which $2(g')^2 - agg''$ has no zeros, where $a \neq 1$ is a constant. For meromorphic functions $g$, Bergweiler\cite{b} and others obtained results under certain conditions. These results lead naturally to questions regarding the meromorphic solutions of the differential equation
\begin{align}
\label{1}
ff'' - a(z)(f')^2 = d(z)e^{c(z)},
\end{align}
where $a$ and $d$ are polynomials and $c$ is a nonconstant entire function. When $c$ is a polynomial, the explicit forms of entire solutions $f$ were given by Gundersen and Yang \cite{gy}.

The three differential polynomials
\begin{equation}
\label{2}
ff'' - a(z)(f')^2, \quad f'f'' - a(z)f^2, \quad ff' - a(z)(f'')^2,  
\end{equation}
where \(a(z)\) is a polynomial, have the common characteristics that they are  binomial differential polynomials of degree two, where each of \(f, f', f''\) appears in exactly one term. Regarding the differential polynomials in \eqref{2},  by comparing Eqs. \eqref{1}, Gundersen and Yang \cite{gy} posed five questions; blow we list only the two relevant to this paper.

\textbf{Question 1} \emph{ Suppose \(a, b, c \)  are polynomial \(ab \neq 0\), \(c\) nonconstant. What can be said about the entire solutions of equation \begin{equation}
\label{1.1}
f'f'' - a(z)f^2 = b(z)e^{2c(z)}?
\end{equation} }

\textbf{Question 2} \emph{If \(f\) is an entire solution of an equation of the form \eqref{1.1}, where either \(a(z)\) or \(b(z)\) is non-constant, then does \(f\) necessarily have the form \(f(z) = p(z)e^{c(z)}\) where \(p(z)\) is a polynomial?}

Wu et al.\cite{wll} gave a partial solution to Questions 1 and 2.
 
\begin{thm}[\cite{wll}]
\label{thw}
Let \(f\) be an entire solution of the differential Eq. \eqref{1.1}. If all zeros of \(f'\) are simple, then we have one of the following situations:

(1) There exist at least one of \(a, b, c'\) is not constant, we have three possibilities:

\rm(i) \(b\) is non-constant and \(f = Re^{c}\) where \(R\) is a polynomial satisfying
\[
(R^{\prime} + Rc^{\prime})\left[R^{\prime \prime} + 2R^{\prime}c^{\prime} + Rc^{\prime \prime} + R(c^{\prime})^{2}\right] - aR^{2} = b.
\]

\rm(ii) \(b \equiv \mu\) for some constant \(\mu \neq 0\), one of the following three cases occur:
\begin{itemize}
    \item \(a(z) = c'(z)c''(z) + (c'(z))^3 - A_1\), where \(A_1\) is a non-zero constant, and
    \[
    f(z) = \alpha e^{c(z)}, 
    \]
    where \(\alpha\) is a constant satisfying \(\alpha^{2}A_{1} = \mu\).
    
    \item \(a(z) = c'(z)c''(z) + (c'(z))^{3} - L_{1}(z)\), where \(L_{1}(z)\) is a polynomial of degree 1, and
    \[
    f(z) = R(z)e^{\frac{\lambda}{2} z^2 +\gamma z}, 
    \]
    where \(\lambda (\neq 0)\), \(\gamma\) are constants with \(c'(z) = \lambda z + \gamma\) and \(R(z)\) is a polynomial satisfying
    \[
    R^{\prime}R^{\prime \prime} + 2c^{\prime}(R^{\prime})^{2} + \lambda RR^{\prime} + c^{\prime}RR^{\prime \prime} + 3(c^{\prime})^{2}RR^{\prime} + L_{1}R^{2} = \mu .
    \]
    
    \item \(a(z) = c'(z)c''(z) + (c'(z))^{3} - L_{2}(z)\), where \(L_{2}(z)\) is a polynomial with \(\deg L_{2}(z) \geq 2\), and
    \[
    f(z) = R(z)e^{c(z)},
    \]
    where \(\deg c(z)\geq 3\) and \(R(z)\) is a polynomial satisfying
    \[
    R^{\prime}R^{\prime \prime} + 2(R^{\prime})^{2}c^{\prime} + RR^{\prime}c^{\prime \prime} + RR^{\prime \prime}c^{\prime} + 3RR^{\prime}(c^{\prime})^{2} + L_{2}R^{2} = \mu .
    \]
\end{itemize}

(iii) \(\lambda (f) = \rho (f)< \infty .\)

(2) \(a(z)\equiv \tau\) for some constant \(\tau \neq 0,b(z)\equiv \mu\) for some constant \(\mu \neq 0,c(z) = \lambda z\) for some constant \(\lambda \neq 0\) and one of the following three cases occur:
\[
f(z) = \alpha e^{\lambda z},  
\]
where \(\alpha\) is a constant satisfying \((\lambda^{3} - \tau)\alpha^{2} = \mu .\)
\[
f(z) = c_{1}e^{\lambda z} + c_{2}e^{-\frac{1}{2}\lambda z},  
\]
where \(c_{1},c_{2}\) are non-zero constants such that \(\frac{9}{8} c_{1}^{2}\lambda^{3} = \mu\)
\[
f(z) = c_{1}e^{t_{1}z} + c_{2}e^{t_{2}z},  
\]
where \(t_{1},t_{2}\) are two distinct constants such that \(t_{1}^{3} = t_{2}^{3}\) and \(c_{1},c_{2}\) are non-zero constants such that \(c_{1}c_{2}t_{2}(t_{2} - t_{1})(t_{2} + 2t_{1}) = \mu .\)
\end{thm}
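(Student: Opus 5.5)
The plan is to write $f$ in terms of its zeros and an exponential factor, and then use Nevanlinna theory to force the equation into a linear ODE with polynomial coefficients.

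\emph{Growth.} First I show $f$ is transcendental of finite order. If $f$ were a polynomial, the left side of \eqref{1.1} would be a polynomial while the right side is transcendental, which is impossible. Write $F=f'f''-af^2$. Then $T(r,F)\le 2T(r,f)+S(r,f)$. Comparing with $T(r,be^{2c})\asymp r^{\deg c}$, and using the logarithmic derivative lemma for $m(r,f'f''/f^2)$, I expect $\rho(f)<\infty$. To do this I rewrite the equation as
\[
f^2\Bigl(\frac{f'}{f}\frac{f''}{f}-a\Bigr)=be^{2c}.
\]

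\emph{Reduction to a linear equation.} Differentiate \eqref{1.1} and eliminate $e^{2c}$. This gives
\[
(f'f''-af^2)\Bigl(\frac{b'}{b}+2c'\Bigr)=(f'')^2+f'f'''-a'f^2-2aff'.
\]
At a zero $z_0$ of $f'$ (with $f(z_0)\neq0$; note $f$ and $f'$ cannot vanish together except at zeros of $b$), the equation gives $-af^2=be^{2c}$ at $z_0$. When $f'$ has only simple zeros, the hypothesis of the theorem, I would consider the auxiliary function
\[
\phi=\frac{f''-\alpha f'}{f'}\quad\text{or}\quad \psi=\frac{f'''}{f'}-\beta\frac{f''}{f'},
\]
with $\alpha$ a suitable polynomial built from $c'$ and $b'/b$. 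The goal is to show $\phi$ is a polynomial. Its poles can occur only at zeros of $f'$ and of $b$, and simplicity of the zeros of $f'$, together with the differentiated identity evaluated at such zeros, should cancel the residues. Then $m(r,\phi)=S(r,f)$ by the logarithmic derivative lemma, so $\phi$ is a rational function, and in fact a polynomial. This yields a second-order linear ODE with polynomial coefficients for $f$, with $f'$ satisfying a first-order relation.

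\emph{Case analysis.} Next comes the dichotomy. Either $f$ has finitely many zeros, or $\lambda(f)=\rho(f)$, which is case (iii).

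If $f$ has finitely many zeros, set $f=Re^{h}$ with $R$ a polynomial and $h$ a polynomial by finite order. Substituting into \eqref{1.1} and applying Borel's lemma to the exponential terms forces $h=c+\text{const}$, unless the growth of the terms permits cancellation.

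When $b$ is nonconstant this gives (i). When $b$ is constant, I expand
\[
(R'+Rc')(R''+2R'c'+Rc''+R c'^2)-aR^2=b,
\]
write $a=c'c''+c'^3-L$, and split according to $\deg L=0$, $1$, or $\ge 2$. Degree counting then yields the three bullets, with $R$ constant when $L$ is constant.

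\emph{Constant coefficients.} When $a$, $b$ and $c'$ are all constant, $f$ may be an exponential polynomial. Using the linear ODE with constant coefficients, $f=\sum c_je^{t_jz}$. Substituting and comparing exponents, the products must produce only $e^{2\lambda z}$. This forces $t_j+t_k\in\{2\lambda\}$ or cancellations, which gives $t_j^3$ equal, or the special pair $\lambda,-\lambda/2$. This yields the list in (2).

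\emph{Main obstacle.} I expect the hardest step to be proving the auxiliary function is a polynomial: the residue cancellation at zeros of $f'$, and its interaction with zeros of $b$. This is exactly where the simple-zero hypothesis is used. I would try first $\alpha=\tfrac12(b'/b+2c')$, mimicking the Gundersen--Yang argument for \eqref{1}.
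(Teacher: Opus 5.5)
The paper does not prove this cited result directly. It recovers a sharper version from its proof of Theorem~\ref{th1}, which needs no hypothesis on the zeros of $f'$. Measured against that argument, your proposal has a genuine gap at exactly the step you flag, and as formulated that step is false, not merely hard.

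\textbf{The auxiliary function.} Since $\phi+\alpha=f''/f'$ with $\alpha$ rational, $\phi$ has a simple pole with residue $1$ at every simple zero of $f'$ outside the finitely many poles of $\alpha$. Nothing can cancel it. So ``$\phi$ is a polynomial'' would force $f'$ to have finitely many zeros. Your first-order relation $f''=(\alpha+\phi)f'$ would then exclude solutions the theorem itself lists. For $f=c_1e^{\lambda z}+c_2e^{-\lambda z/2}$, the zeros of $f'$ are the infinitely many points $z_0$ with $e^{3\lambda z_0/2}=c_2/(2c_1)$. At each of them $f''(z_0)=\frac32 c_1\lambda^2e^{\lambda z_0}\neq0$, so all these zeros are simple.

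Your second candidate $\psi$ fails for a subtler reason. Its residue at a simple zero $z_0$ of $f'$ is $f'''(z_0)/f''(z_0)-\beta(z_0)$. Differentiating \eqref{1.1} twice and evaluating at $z_0$ gives $bf''^2=Pf^2$ and $3bPf'''=(bP'+2bc'P)f''+2abPf$, where $P=ba'-ab'-2abc'$. So $f'''/f''$ at $z_0$ involves $f(z_0)/f''(z_0)=\pm\sqrt{b/P}(z_0)$, with a sign that may vary from zero to zero. Hence no single rational $\beta$ removes all the residues.

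\textbf{The missing idea.} Keep the $f$-term in the numerator. The paper uses $h=A/f'$ with
$A=2abPf+(d'P-dP')f'+(bP'+2bc'P)f''-3bPf'''$ and $d=b'+2bc'$. This $A$ vanishes at simple zeros of $f'$ because the twice-differentiated equation reads $f''A=f'B$.
\begin{itemize}
\item Controlling $m(r,h)$ then requires $m(r,f/f')=S(r,f)$. This is not a logarithmic-derivative estimate; it comes from solving the once-differentiated equation for $f/f'$.
\item Even then $h$ is only small. One must eliminate down to $uf'=vf$ and handle $uv\not\equiv0$, where $f$ has few zeros and hence $f=Re^c$.
\item When $v\equiv0$, one applies Clunie's lemma to a Riccati-type equation in $h$ to make $h$ rational.
\end{itemize}
Only then does one obtain $f''=pf'+qf$ with rational $p,q$. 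The two-exponential solutions come out of the compatibility system for $p,q$ (Lemma~\ref{l1}), not from a first-order equation for $f'$.

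\textbf{Two further steps need repair.}
\begin{itemize}
\item Your estimate $T(r,f'f''-af^2)\le 2T(r,f)+S(r,f)$ only bounds $T(r,f)$ from below. Rewriting $f^2=be^{2c}/(f'f''/f^2-a)$ gives no upper bound, since $m\bigl(r,1/(f'f''/f^2-a)\bigr)$ is uncontrolled. The paper obtains finite order only after proving $N(r,1/f)=S(r,f)$: it estimates $T(r,1/f^2)$ from $1/f^2=(f'f''/f^2-a)/(be^{2c})$, or uses the linear equation with rational coefficients.
\item ``Either $f$ has finitely many zeros or $\lambda(f)=\rho(f)$'' is not a dichotomy. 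For example, $e^{z^2}\sin z$ has infinitely many zeros and $\lambda<\rho$. Ruling out $\lambda(f)<\rho(f)$ needs an argument you do not give.
\end{itemize}
In the paper's approach alternative (iii) never arises: every solution is $Re^c$ or one of the two exponential sums.
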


Later, Yang et al. \cite{yss} gave a partial solution to Questions 1 and 2 under the condition that $f$ satisfying $\lambda(f)<\rho(f)$.  Recently Long et al. \cite{Long} removed condition that all zeros of \(f'\) are simple in Theorem \ref{thw}, and obtained the same conclusion as Theorem \ref{thw}. To the best of our knowledge, Questions 1 and 2 have not yet been fully solved. By the following theorem, we completely solve Questions 1 and 2.

 \begin{thm}
 \label{th1}
 An entire function \(f\) satisfies \eqref{1.1}
if and only if it belongs to one of the following three classes:

(I) There exists a nonzero polynomial \(R\), such that
\begin{equation*}
(R' + Rc')(R'' + 2R'c' + Rc'' + R(c')^2) - aR^2 = b,
\end{equation*}
and
\begin{equation*}
f = Re^c.
\end{equation*}

(II) There exist  nonzero constants \(\lambda, \mu, \) and \(\alpha, \beta\), such that
\begin{equation*}
c(z) = \lambda z, \quad b \equiv \mu, \quad a \equiv -\frac{\lambda^3}{8}, \quad \mu = \frac{9}{8}\alpha^2\lambda^3,
\end{equation*}
and
\begin{equation*}
f(z) =\alpha e^{\lambda z} + \beta e^{-\lambda z/2}.
\end{equation*}

(III) There exist  nonzero constants \(\lambda, \mu,  \) and \(\alpha, \beta\), such that
\begin{equation*}
c(z) = \lambda z , \quad b \equiv \mu, \quad a \equiv -8\lambda^3, \quad \mu = 24\alpha\beta\lambda^3,
\end{equation*}
and
\begin{equation*}
f(z) =  \alpha e^{(1+i\sqrt{3})\lambda z} + \beta e^{(1-i\sqrt{3})\lambda z} .
\end{equation*}
 \end{thm}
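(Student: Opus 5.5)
The sufficiency part ("if") is a direct substitution. In class (I) we put \(f=Re^{c}\) into \eqref{1.1}. In classes (II) and (III) we check that the cross terms and the squared terms cancel, so that the left side equals \(\mu e^{2\lambda z}\). For example, in (II) the term \(e^{-\lambda z}\) cancels because \(f'f''\) contributes \(-\beta^2\lambda^3/8\), and in (III) the squared exponentials cancel because \(t^3=-8\lambda^3\) for \(t=(1\pm i\sqrt3)\lambda\).

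For necessity we follow the strategy of \cite{wll,Long} but drop the hypothesis on \(f'\).

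\textbf{Step 1: finite order.} Let \(f\) be an entire solution. Equation \eqref{1.1} shows that \(f\) is transcendental; otherwise the left side is a polynomial while the right side is not. Writing \(2T(r,f)\le T(r,f'f''/f^2)+\cdots\), standard Clunie/Wiman--Valiron arguments give \(\rho(f)<\infty\), and in fact \(\rho(f)=\deg c\) under the growth comparison \(T(r,f)\asymp T(r,e^{c})\).

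\textbf{Step 2: differentiate and eliminate \(e^{2c}\).} Differentiating \eqref{1.1} and eliminating \(e^{2c}\) gives
\begin{equation*}
b\,(f''^2+f'f''')-(b'+2bc')\,f'f'' = a'bf^2+2abff'-a(b'+2bc')f^2 .
\end{equation*}
At a zero \(z_0\) of \(f\) that is not a zero of \(b\), equation \eqref{1.1} shows \(f'(z_0)f''(z_0)\neq0\). We then introduce an auxiliary function built from the ratio \(\varphi=f'f''/f^2\), or alternatively \(h=f'/f-c'\). We have \(f'f''-af^2=be^{2c}\), so \(f^2(\varphi-a)=be^{2c}\), and \(\varphi-a\) has no poles except at zeros of \(f\). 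The key claim is that \(g:=f e^{-c}\) satisfies \(T(r,g)=S(r,f)\) or has a controlled structure. We write \(f=ge^{c}\) and obtain
\begin{equation*}
(g'+gc')\,(g''+2g'c'+gc''+gc'^2)-ag^2=b ,
\end{equation*}
which is a differential equation for \(g\) with polynomial coefficients and no exponential.

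\textbf{Step 3: the case where \(g\) is a polynomial.} If \(g\) is a polynomial we are in class (I).

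\textbf{Step 4: the case where \(g\) is transcendental (main obstacle).} Now \(g\) satisfies a second-order algebraic ODE with polynomial coefficients, so by Step 1 it has finite order. We apply Wiman--Valiron: \(g^{(k)}/g\sim(\nu/z)^k\) near maximum-modulus points. The dominant terms of the expansion are
\begin{equation*}
g^2\Big[(\nu/z+c')^{3}-a\Big]\approx b .
\end{equation*}
Since \(|g|\to\infty\) at these points, we must have \(\nu/z\sim -c'+\omega(a)^{1/3}\), and the polynomial growth of \(\nu\) forces \(\deg c\le1\) unless cancellations occur. A finer analysis uses the logarithmic derivative \(w=g'/g\), which satisfies a Riccati-type first-order relation after dividing by \(g^2\), with remainder \(b/g^2\) small on a large set. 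This shows \(w\) is asymptotically a root of \((w+c')^3=a\). The aim is to conclude that \(a,b,c'\) are constants and \(g\) is an exponential polynomial with frequencies \(t-\lambda\), where \(t^3=a\).

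\textbf{Step 5: finish with Borel's theorem.} Once \(c=\lambda z\), we write \(f=\sum c_je^{t_jz}\) with \(t_j^3=a\) among its frequencies, plus possibly \(e^{\lambda z}\). Substituting into \eqref{1.1} and comparing exponentials via Borel's theorem forces at most two terms. The possible frequency pairs are \(\{\lambda,-\lambda/2\}\) with \(a=-\lambda^3/8\), and the two nonreal cube roots \((1\pm i\sqrt3)\lambda\), whose sum is \(2\lambda\) so that the product gives \(e^{2\lambda z}\), with \(a=-8\lambda^3\). The remaining cases either reduce to (I) or are inconsistent. I expect the hardest part to be ruling out transcendental \(g\) when \(\deg c\ge2\) or when \(a\) is nonconstant, especially when \(f'\) has multiple zeros. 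There I would first try to show that \(N(r,1/f)=S(r,f)\), arguing with the auxiliary function at the zeros of \(f\), and then reduce to \cite{yss}.
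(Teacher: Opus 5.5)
\textbf{There is a genuine gap in Steps 4--5, which carry the whole content of the necessity direction.} Your sufficiency check, the $e^{2c}$-free identity of Step 2 (the paper's \eqref{2.1}), and the exponential-free equation for $g=fe^{-c}$ are all correct.

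\textbf{Why Step 4 fails.} Wiman--Valiron only gives, at maximum-modulus points $z_r$ of $g$, the relation $\nu(r)/z_r\approx -c'(z_r)+\omega\, a(z_r)^{1/3}$ with $\omega^3=1$. This is consistent with $\nu(r)$ growing polynomially, for example $\nu(r)\asymp r^{\deg c}$ when $\deg c\ge 2$ and $\deg a$ is small, with $z_r$ in suitable directions. The ``cancellation'' you mention is not exceptional; it is exactly what a hypothetical solution with $\deg c\ge2$ would display. So nothing forces $\deg c\le1$ or $a,b$ constant, and the argument says nothing about the zeros of $g$.

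\textbf{Consequences for the rest of the plan.}
\begin{itemize}
\item Step 5 has nothing to act on: the Borel comparison presupposes $f=\sum c_je^{t_jz}$, which you never establish.
\item Your ``key claim'' $T(r,g)=S(r,f)$ is false in classes (II) and (III), where $T(r,g)$ and $T(r,f)$ are both of exact order $r$.
\item The fallback (first prove $N(r,1/f)=S(r,f)$, then invoke \cite{yss}) cannot work in general, because the solutions in (II) and (III) have $\lambda(f)=\rho(f)=1$.
\item In Step 1, $\rho(f)=\deg c$ is asserted rather than proved; only $\rho(f)<\infty$ follows routinely from Wiman--Valiron.
\end{itemize}

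\textbf{What is missing} is a dichotomy that either produces few zeros or reduces $f$ to a second-order linear equation with rational coefficients. The paper does this as follows.
\begin{itemize}
\item It differentiates \eqref{2.1} again and eliminates $f^2$ to get $f''A=f'B$.
\item A multiple zero of $f'$ must be a zero of $Pb$, so $h=A/f'=B/f''$ satisfies $T(r,h)=S(r,f)$. This gives the linear equations \eqref{2.7}, \eqref{2.8} with small coefficients, and after elimination $uf'=vf$ with $u,v$ small.
\item If $uv\not\equiv0$, then $N(r,1/f)=S(r,f)$, $f$ has finite order, $f=R_0e^{W}$, and this is class (I).
\item If $uv\equiv0$, one gets the Riccati equation \eqref{2.20} for $h$. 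Clunie's lemma makes $h$ rational, so $f''=pf'+qf$ with rational $p,q$.
\item Substituting this into \eqref{1.1} gives $A_0(f')^2+A_1ff'+A_2f^2=0$. A nontrivial relation again forces finitely many zeros and class (I).
\item The trivial relation yields the system \eqref{2.24}. After the substitution $S=-q/p$, $X=(q^2+4ap)/p^2$, it is forced to have constant coefficients with $c$ linear. This uses the pole--residue counting of Lemma~\ref{l1} when $X\not\equiv0$, and the residue argument for $r=q/(2p)$ when $X\equiv0$.
\item Only then is $f$ a solution of a constant-coefficient equation, and (II) and (III) come out by solving it.
\end{itemize}
Some mechanism of this kind is needed to replace your Step 4.
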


We give the following example to show that there exists a solution 
 $f$ such that $f'$
  has multiple zeros.
  \begin{exa}
  Let 
$a(z) = 1,  c(z) = z,   R(z) = z^2 - 2z + 2,$
and let
$b(z) = 6z^3 - 8z^2 + 8z - 4,  f(z) = R(z)e^z$.
Then
$f'(z) = z^2e^z,   f''(z) = z(z+2)e^z,$
so \(f'\) has a double zero at \(z=0\);  and
\begin{align*}
f'f'' - f^2 &= [z^3(z+2) - (z^2 - 2z + 2)^2]e^{2z} \\
&= (6z^3 - 8z^2 + 8z - 4)e^{2z} = be^{2c}.
\end{align*}
  \end{exa}

 \section{Lemmas}

\begin{lem}
\label{l1}
Suppose \(S,X,G\) are rational functions with \(XG\neq 0\), and \(a,c\) are nonzero polynomials with \(c\) nonconstant. If
\begin{equation}
\label{3.1}
S X' + (2S' + 3S^2)X + X^2 = 0,
\end{equation}
\begin{equation}
\label{3.2}
8a = 2SS' + X' + S^3 + 3SX,
\end{equation}
\begin{equation}
\label{3.3}
S = 2c' + \frac{G'}{G},
\end{equation}
then \(S\) and \(G\) must be nonzero constants such that
\[
X \equiv -3S^2,\qquad c' \equiv \frac{S}{2}, \qquad a=-S^3.
\]
\end{lem}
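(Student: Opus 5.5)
The plan is to linearize \eqref{3.1}, use the transcendence of $e^{c}$ to force a rational identity, and then use pole and degree counting to force $G$ and $S$ to be constant. First note the target is consistent. If $S$ is a nonzero constant and $X=-3S^2$, then \eqref{3.1} reads $-9S^4+9S^4=0$. Also \eqref{3.2} gives $8a=S^3-9S^3$, i.e. $a=-S^3$, and \eqref{3.3} with $G'/G$ of zero residue at infinity forces $c'=S/2$. So the real content is proving that $S'\equiv 0$ and $X\equiv -3S^2$.

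\emph{Step 1 (linearize).} Since $X\neq 0$, put $Y=1/X$. Then \eqref{3.1} becomes the linear equation
\[
SY'-(2S'+3S^2)Y=1 .
\]
By \eqref{3.3}, $e^{3\int S}=e^{6c}G^3$, so a homogeneous solution is $S^2G^3e^{6c}$. Write $Y=S^2G^3W$ with $W$ rational. The equation becomes
\[
\bigl(We^{-6c}\bigr)'=\frac{e^{-6c}}{S^3G^3},\qquad\text{that is,}\qquad W'-6c'W=\frac{1}{S^3G^3}. \tag{$*$}
\]
Because $c$ is a nonconstant polynomial, $e^{6c}$ is transcendental over the rational functions. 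Hence no additive constant can appear, and $(*)$ is a genuine identity between rational functions.

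\emph{Step 2 (poles of $G$).} Let $z_0$ be a zero or pole of $G$, of order $n\in\mathbb Z\setminus\{0\}$. Then $S=n/(z-z_0)+O(1)$ near $z_0$, since $c'$ is a polynomial, and $1/(S^3G^3)$ behaves like $(z-z_0)^{3-3n}$.
\begin{itemize}
\item If $W\sim w(z-z_0)^k$, then $W'-6c'W$ has leading term $kw(z-z_0)^{k-1}$ when $k\neq0$.
\item When $k=0$, the first nonvanishing derivative term takes over.
\end{itemize}
Matching exponents gives $k=4-3n$. I then intend to feed this back through $X=1/(S^2G^3W)$ and the local form of \eqref{3.1}. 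There the indicial computation gives the constraint $n(k'+2-3n)=0$ or $n(4-3n)y=1$, where $X\sim y^{-1}(z-z_0)^{-k'}$. I will also use \eqref{3.2}, whose left side $8a$ is pole-free, to cancel the pole at $z_0$ of $2SS'+S^3$. Carrying the Laurent expansions two or three terms deep should show that no integer $n\ne 0$ is compatible.

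This step is where I expect the main difficulty: a careful local Laurent computation at zeros and poles of $G$ combining \eqref{3.1} and \eqref{3.2}. The conclusion would be that $G$ has no zeros or poles, hence is a nonzero constant, and that $S=2c'$ is a polynomial.

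\emph{Step 3 (behaviour at infinity).} Now $S=2c'$ is a nonzero polynomial of degree $d\ge0$. Let $X\sim\kappa z^m$ at infinity. In \eqref{3.1} the term $3S^2X$, of order $2d+m$, dominates $SX'$ and $2S'X$, which are of order $\le d+m-1$. It can only be cancelled by $X^2$, so $m=2d$ and $\kappa$ equals the leading coefficient of $-3S^2$.

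Write $X=-3S^2+T$. Substituting into \eqref{3.1} gives
\[
T^2-3S^2T+ST'+2S'T=12S^2S' .
\]
The pole analysis of Step 2, applied to $T$, shows $T$ is a polynomial. Comparing degrees, the $T^2$ and $-3S^2T$ terms force $\deg T\le 3d-1-2d=d-1$ unless $T\equiv0$. Then \eqref{3.2} becomes
\[
8a=-8S^3+2SS'+T'+3ST .
\]
Combining this with the displayed identity, I will show that $S'\equiv0$: if $d\ge1$, the leading term $12S^2S'$ cannot be matched by $-3S^2T$ with $\deg T=d-1$ without forcing a contradiction in the next-order coefficients. Once $S'\equiv0$, the displayed identity gives $T(T-3S^2)=0$ with $\deg T<2d$, so $T\equiv0$. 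This yields $X=-3S^2$, $c'=S/2$ and $a=-S^3$.
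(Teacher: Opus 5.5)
There is a genuine gap in Step 2, and it is not a matter of computational difficulty: the local claim you intend to prove is false for $n=1$. Poles of $G$ and zeros of order $n\ge 2$ can be excluded by Laurent analysis of \eqref{3.1} and \eqref{3.2}. The paper does exactly this and finds that every pole of $S$ is simple with residue $1$, with $X$ having a double pole with leading coefficient $1$ there.

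At a simple zero $z_0$ of $G$, however, the two equations are compatible to all orders. Put $t=z-z_0$ and $S=t^{-1}+s_0+s_1t+O(t^2)$ with $s_0,s_1$ arbitrary. The linear equation $SY'-(2S'+3S^2)Y=1$ has the holomorphic local solution $Y=t^2\bigl(1+2s_0t+(3s_0^2+2s_1)t^2+\cdots\bigr)$. Hence $X=1/Y=t^{-2}-2s_0t^{-1}+(s_0^2-2s_1)+O(t)$ satisfies \eqref{3.1}. In $2SS'+X'+S^3+3SX$, taking the four terms in that order:
\begin{itemize}
\item the coefficient of $t^{-3}$ is $-2-2+1+3=0$;
\item the coefficient of $t^{-2}$ is $-2s_0+2s_0+3s_0-3s_0=0$;
\item the coefficient of $t^{-1}$ is $0+0+3(s_0^2+s_1)-3(s_0^2+s_1)=0$.
\end{itemize}
So the right-hand side of \eqref{3.2} is automatically holomorphic at $z_0$, whatever the regular part of $S$. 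No number of further Laurent terms will give a contradiction.

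Your reformulation $(*)$ adds no local information either. It is equivalent to \eqref{3.1}, and for $n\le 1$ its right-hand side $1/(S^3G^3)$ is holomorphic at $z_0$. So every local solution $W$ is holomorphic there with arbitrary value $W(z_0)$; in particular $k=4-3n$ is not forced.

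What is missing is a global argument, which is the heart of the paper's proof: count zeros and poles of the rational function $Y=1/X$ on the Riemann sphere. Let $N$ be the number of finite poles of $S$, $d=\deg c'$, and $R_S$ the number of distinct zeros of $S$.
\begin{itemize}
\item $Y$ has a double zero at each of the $N$ poles of $S$ and a zero of order $2d$ at $\infty$.
\item Its finite poles can lie only at zeros of $S$, each of order at most the multiplicity minus one.
\item The zeros of $S$ have total multiplicity $N+d$.
\end{itemize}
Hence $2N+2d\le N+d-R_S$, which forces $N=d=0$.

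Step 3 has the same defect. Once $S$ is a polynomial, local analysis does not show that $T=X+3S^2$ is a polynomial. At any point with $S(z_0)\ne 0$, equation \eqref{3.1} allows $Y$ to have a simple zero with $Y'(z_0)=1/S(z_0)$, i.e. $X$ to have a simple pole with residue $S(z_0)$. Your rewriting of \eqref{3.2} should read $8a=-8S^3-4SS'+T'+3ST$. Once $S$ and $T$ are polynomials, this identity merely defines $a$ and carries no information. So the promised next-order contradiction for $d\ge 1$ would have to come from \eqref{3.1} alone, and you have not supplied it. The counting argument settles this case at once: $Y$ would have a zero of order $2d$ at $\infty$ but poles of total order at most $d-R_S<d$.
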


\begin{proof}
Dividing \eqref{3.1} by \(X^2\), we obtain
\begin{equation}
\label{3.4}
S Y' - (2S' + 3S^2)Y = 1,
\end{equation}
where \(Y=1/X\).

First we show that every finite pole of \(S\) is simple. Let \(z_0\) be a finite pole of \(S\) of order \(m\ge 1\), and write \(t=z-z_0\). Suppose
\[
S=k t^{-m}+O(t^{-m+1}),\qquad X=x t^{\nu}(1+O(t)),
\]
with \(k\neq 0\), \(x\neq 0\). If \(m>1\), then in \eqref{3.1} the term \(3S^2X\) has order \(\nu-2m\), while \(S X'\) and \(2S'X\) have order at least \(\nu-m-1>\nu-2m\). If \(\nu\ge -2m\), then \(3S^2X\) is the unique lowest-order term, with nonzero coefficient \(3k^2x\), impossible. If \(\nu<-2m\), then \(X^2\) has order \(2\nu<\nu-2m\), again a unique lowest-order term with nonzero coefficient \(x^2\), impossible. Hence \(m=1\).

Thus, at a finite pole \(z_0\) of \(S\), we may write
\begin{equation}
\label{3.5}
S = k t^{-1} + O(1),\quad
X = x t^{\nu} (1 + O(t)),\quad
k \in \mathbb{Z} \setminus \{0\},\quad
x \neq 0,\quad
\nu \in \mathbb{Z}.
\end{equation}
Substituting \eqref{3.5} into \eqref{3.1}, we get
\[
x(k\nu - 2k + 3k^2) t^{\nu-2} + x^2 t^{2\nu} + O(t^{\nu-1}) = 0.
\]
If \(\nu<-2\), the term \(X^2\) gives an uncancelled lowest-order term. If \(\nu>-2\), then \eqref{3.1} forces \(\nu=2-3k\); but then the coefficient of \(t^{-3}\) in \eqref{3.2} is \(k^2(k-2)\), which cannot vanish while \(\nu>-2\). Hence \(\nu=-2\). Comparing the coefficient of \(t^{-4}\) in \eqref{3.1} gives
\[
x = -k(3k-4).
\]
Substituting this into \eqref{3.2}, the coefficient of \(t^{-3}\) on the right-hand side is
\[
k^3 - 2k^2 + (3k - 2)x = -8k(k - 1)^2.
\]
The left-hand side \(8a\) is analytic at finite points, so this coefficient must vanish. Hence \(k=1\), and consequently \(x=1\).
In summary, at every finite pole \(z_0\) of \(S\), we have
\begin{equation}
\label{3.6}
\operatorname{ord}_{z_0} S = -1,\quad
\operatorname{res}_{z_0} S = 1,\quad
\operatorname{ord}_{z_0} X = -2,\quad
\operatorname{ord}_{z_0} Y = 2,
\end{equation}
where \(\text{ord}_{z_0} S\) denotes the order of the pole of \(S\) at \(z_0\), and \(\text{res}_{z_0} S\) denotes the residue of \(S\) at \(z_0\).

Next we control the poles of \(Y\). If \(X(z_0)=0\) and \(S(z_0)\neq 0\), then the lowest-order term in \eqref{3.1} is \(S X'\), which cannot vanish. Hence \(X\) has no zero at any point where \(S\neq 0\). Therefore the finite poles of \(Y=1/X\) can occur only at finite zeros of \(S\). Let \(z_0\) be a zero of \(S\) of multiplicity \(m\), i.e.
\[
S(t)=A t^m(1+O(t)),\qquad A\neq 0,\ m\ge 1,
\]
where \(t=z-z_0\). Suppose that \(Y=1/X\) has a pole of order \(\ell\) at \(z_0\):
\[
Y(t)=B t^{-\ell}(1+O(t)),\qquad B\neq 0,\ \ell\ge 0.
\]
Substituting these expansions into \eqref{3.4} gives
\[
S Y'\sim AB(-\ell)t^{m-\ell-1},
\]
and
\[
(2S'+3S^2)Y\sim 2ABm\,t^{m-\ell-1}+O(t^{m-\ell}).
\]
Thus the lowest-order term on the left-hand side is
\[
AB(-\ell-2m)t^{m-\ell-1}.
\]
Since \(\ell+2m>0\), this term does not vanish. The right-hand side is the constant \(1\), which has no negative powers near \(z_0\). Hence the left-hand side cannot have a negative power either, and therefore
\begin{equation}
\label{3.7}
\ell \le m - 1.
\end{equation}

Since the logarithmic derivative of a rational function is \(O(1/z)\) at infinity, \eqref{3.3} shows that the polynomial part of \(S\) is exactly \(2c'\). Let \(S\) have \(N\) finite poles, and set
\[
n = \deg(2c') = \deg c - 1 \ge 0.
\]
Since every finite pole of \(S\) is simple, and the degree of its polynomial part is \(n\), after writing \(S\) as a reduced fraction, the degree of the denominator is \(N\), and the degree of the numerator is \(N+n\); hence the total multiplicity of the finite zeros of \(S\) is \(N+n\).

If \(N+n\ge 1\), suppose among these finite zeros there are \(R_S\ge 1\) distinct points. By \eqref{3.7},
\begin{equation}
\label{3.8}
\text{the total order of finite poles of }Y \le N+n-R_S.
\end{equation}
On the other hand, \eqref{3.6} gives that the total order of finite zeros of \(Y\) is at least \(2N\). If \(n\ge 1\), and \(S(z)\sim s_n z^n\) at infinity, comparing the highest-order terms at infinity in \eqref{3.4} yields
\begin{equation*}
Y(z)\sim -\frac{1}{3s_n^2} z^{-2n}.
\end{equation*}
Hence \(Y\) has a zero of order \(2n\) at infinity. If \(n=0\), the same comparison shows that the point at infinity is neither a zero nor a pole of \(Y\). Therefore the total order of zeros of \(Y\) is at least \(2N+2n\). Since \(Y\) is rational, its total order of poles equals its total order of zeros, and all its poles are finite. Thus
\[
\text{the total order of finite poles of }Y \ge 2N+2n.
\]
Combining this with \eqref{3.8}, we get
\[
2N+2n \le N+n-R_S,
\]
which contradicts \(N+n\ge 1\) and \(R_S\ge 1\). Therefore
\[
N=n=0.
\]
Thus \(S\) is a nonzero constant. Then \eqref{3.4} becomes
\[
S Y' - 3S^2Y = 1.
\]
Since \(Y\) is rational, its general solution
\[
Y=Ce^{3Sz}-\frac{1}{3S^2}
\]
forces \(C=0\). Hence
\[
Y=-\frac{1}{3S^2},\qquad X=-3S^2.
\]
Finally, \eqref{3.3} implies that \(G'/G\) is constant, so \(G\) is a constant. Then \(S=2c'\), and hence \(c'=S/2\). This proves the lemma.
\end{proof}

Next, we give the lemma of the sufficiency of Theorem \ref{th1}.

\begin{lem}
\label{l2}
    If \(f\) belongs to one of the three classes in Theorem~\ref{th1}, then \(f\) is an entire solution of \eqref{1.1}.
\end{lem}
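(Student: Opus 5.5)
The plan is a direct verification: for each of the three classes I will compute \(f'\) and \(f''\) explicitly, form \(f'f''-a f^2\), and check that it equals \(b e^{2c}\) under the stated constraints. No earlier lemma is needed. The only real work is bookkeeping of exponentials in classes (II) and (III), where cross terms have to cancel.

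\emph{Class (I).} Put \(f=Re^{c}\). Then
\[
f'=(R'+Rc')e^{c},\qquad f''=(R''+2R'c'+Rc''+R(c')^2)e^{c}.
\]
Hence
\[
f'f''-af^2=\bigl[(R'+Rc')(R''+2R'c'+Rc''+R(c')^2)-aR^2\bigr]e^{2c},
\]
and the hypothesis on \(R\) turns the bracket into \(b\). Since \(R\) and \(c\) are polynomials, \(f\) is entire.

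\emph{Class (II).} Put \(f=\alpha e^{\lambda z}+\beta e^{-\lambda z/2}\). Then
\[
f'=\alpha\lambda e^{\lambda z}-\tfrac{\beta\lambda}{2}e^{-\lambda z/2},\qquad f''=\alpha\lambda^2e^{\lambda z}+\tfrac{\beta\lambda^2}{4}e^{-\lambda z/2}.
\]
Multiplying out gives
\[
f'f''=\alpha^2\lambda^3e^{2\lambda z}-\tfrac{\alpha\beta\lambda^3}{4}e^{\lambda z/2}-\tfrac{\beta^2\lambda^3}{8}e^{-\lambda z}.
\]
With \(a=-\lambda^3/8\) we have
\[
-af^2=\tfrac{\lambda^3}{8}\bigl(\alpha^2e^{2\lambda z}+2\alpha\beta e^{\lambda z/2}+\beta^2e^{-\lambda z}\bigr).
\]
Adding these, the \(e^{\lambda z/2}\) terms cancel (\(-\tfrac14+\tfrac14\)) and the \(e^{-\lambda z}\) terms cancel (\(-\tfrac18+\tfrac18\)). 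What remains is \(\tfrac98\alpha^2\lambda^3e^{2\lambda z}=\mu e^{2c}\).

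\emph{Class (III).} Write \(t_1=(1+i\sqrt3)\lambda\) and \(t_2=(1-i\sqrt3)\lambda\). The key facts are:
\begin{itemize}
\item \(t_1+t_2=2\lambda\) and \(t_1t_2=4\lambda^2\);
\item since \(1\pm i\sqrt3=2e^{\pm i\pi/3}\), we get \(t_1^3=t_2^3=-8\lambda^3\).
\end{itemize}
Then
\[
f'f''=\alpha^2t_1^3e^{2t_1z}+\beta^2t_2^3e^{2t_2z}+\alpha\beta\,t_1t_2(t_1+t_2)e^{2\lambda z},
\]
so the cross coefficient is \(8\alpha\beta\lambda^3\). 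With \(a=-8\lambda^3\),
\[
-af^2=8\lambda^3\bigl(\alpha^2e^{2t_1z}+\beta^2e^{2t_2z}+2\alpha\beta e^{2\lambda z}\bigr).
\]
The pure terms cancel because \(t_j^3=-8\lambda^3\), and the \(e^{2\lambda z}\) terms add to \(24\alpha\beta\lambda^3e^{2\lambda z}=\mu e^{2c}\).

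The main point needing care is exactly this cancellation of the non-\(e^{2c}\) exponentials in (II) and (III). It is a finite algebraic identity, so there is no real obstacle.
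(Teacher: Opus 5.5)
Your verification is correct and takes the paper's route: direct substitution for class (I), and for classes (II) and (III) a check that every exponential other than \(e^{2\lambda z}\) cancels. The only cosmetic difference is in (II) and (III): the paper first derives the identity \(f'f''-af^2=\alpha^2(t_1^3-a)e^{2t_1z}+\beta^2(t_2^3-a)e^{2t_2z}+\alpha\beta[t_1t_2(t_1+t_2)-2a]e^{(t_1+t_2)z}\) for \(f=\alpha e^{t_1z}+\beta e^{t_2z}\) and then substitutes the two parameter sets, whereas you expand each case separately.
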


\begin{proof}
  For class  (I),
\[
f' = (R' + Rc')e^c, \quad f'' = (R'' + 2R'c' + Rc'' + R(c')^2)e^c
\]
and $(R' + Rc')(R'' + 2R'c' + Rc'' + R(c')^2) - aR^2 = b,$ we immediately obtain \eqref{1.1}.

For the two exponential terms,    
\[
f = \alpha e^{t_1 z} + \beta e^{t_2 z},
\]
then
\begin{equation}
\begin{aligned}
\label{3.10}
f'f'' - af^2 ={}& \alpha^2(t_1^3 - a)e^{2t_1 z} + \beta ^2(t_2^3 - a)e^{2t_2 z} \\
&+ \alpha \beta  \left[ t_1 t_2 (t_1 + t_2) - 2a \right] e^{(t_1 + t_2)z}.
\end{aligned}  
\end{equation}

For class (II), take
\[
t_1 = \lambda, \quad t_2 = -\frac{\lambda}{2}, \quad a = -\frac{\lambda^3}{8}.
\]
Then
\[
t_2^3 - a = 0, \quad t_1 t_2 (t_1 + t_2) - 2a = 0, \quad t_1^3 - a = \frac{9}{8}\lambda^3.
\]
Substituting this into \eqref{3.10} , we obtain \(f'f'' - af^2 = \mu e^{2c}\).

In Type (III), take
\[
t_{1,2} = (1 \pm i\sqrt{3})\lambda, \quad a = -8\lambda^3.
\]
Then
\[
t_1^3 = t_2^3 = a, \quad t_1 + t_2 = 2\lambda, \quad t_1 t_2 (t_1 + t_2) - 2a = 24\lambda^3.
\]
From \eqref{3.10} and \(\mu = 24\alpha\beta\lambda^3\), we similarly obtain \eqref{1.1}. 
\end{proof}

\section{Proof of Theorem \ref{th1}}
The proof of the sufficiency of the Theorem is given by Lemma \ref{l2}. Next, we prove the necessity.
By \eqref{1.1} and the logarithmic derivative lemma \cite[Theorem 2.2]{c},
\begin{equation*}
\begin{aligned}
T(r, e^{2c}) &= m(r, e^{2c}) \le m\left(r, \frac{f'f'' - af^2}{bf^2}\right) + 2T(r, f) + O(\log r) \\
&\le 2T(r, f) + S(r, f).
\end{aligned}  
\end{equation*}
Hence \(f\) is a transcendental entire function. Taking the logarithmic derivative of both sides of \eqref{1.1} gives 
\begin{equation}
\label{2.1}
b(f'')^2 + bf'f''' - 2abf f' - df'f'' = Pf^2,
\end{equation}
where  
$d = b' + 2bc',   P = ba' - ab' - 2abc'. $ 
We have \(P \neq 0\). Otherwise
$\frac{a'}{a} - \frac{b'}{b} - 2c' \equiv 0$ implies $ e^{2c} = c_1\frac{a}{b}$, where $c_1$ is a constant,
which contradicts the fact that $a,b$ are polynomial.
By \eqref{2.1}, we get
\begin{equation}
\label{2.2}
\frac{b(f'')^2 + bf'f''' - df'f''}{Pff'} - \frac{2ab}{P} = \frac{f}{f'}.  
\end{equation}
By the logarithmic derivative lemma and \eqref{2.2}, we have
$m\left(r, \frac{f}{f'}\right) = S(r,f).$  
 
Differentiating \eqref{2.1}, and then eliminating \(f^2\) by using \eqref{2.1}, we obtain
\begin{equation}
\label{2.3}
\begin{aligned}
0 ={}& (bP' + 2bc'P)(f'')^2 - 3bP f'' f''' + (bP' + 2bc'P) f' f''' - bP f' f^{(4)} \\
&+ (4a'bP - 4abc'P - 2abP') f f' + 2abP (f')^2 + 2abP f f'' + (d'P - dP') f' f''.
\end{aligned} 
\end{equation}

We rewrite \eqref{2.3} as
\(f'' A = f' B\), where
\begin{equation}
\label{2.4}
A = 2abP f + (d'P - dP')f' + (bP' + 2bc'P)f'' - 3bP f''',  
\end{equation}
\begin{equation}
\label{2.5}
B = (4abc'P + 2abP' - 4a'bP)f - 2abP f' - (bP' + 2bc'P)f''' + bP f^{(4)}.  
\end{equation}
Define  
 \begin{align}
    \label{2.6}
    h = \frac{A}{f'} = \frac{B}{f''}.
 \end{align}
  If \(z_0\) is a simple zero of \(f'\), then \(f''(z_0) \neq 0\); from \(f''A = f'B\) we know \(A(z_0) = 0\), hence \(h=A/f'\) is removable at \(z_0\). If \(z_0\) is a multiple zero of \(f'\), then \(f'(z_0) = f''(z_0) = 0\). Substituting into \eqref{2.1} yields $P(z_0)f^2(z_0)=0$.   If \(P(z_0) \neq 0\), then \(f(z_0) = 0\). Substituting into \eqref{1.1}   yields \(b(z_0) = 0\). Therefore, $N(r,h)=N(r,\frac{A}{f'})=O(\log r).$  By $m\left(r, \frac{f}{f'}\right) = O(\log rT(r,f))$ and    the logarithmic derivative lemma, $m(r,h)=S(r,f)$. Thus $T(r,h)=S(r,f)$. 

 From \eqref{2.6}, we obtain two linear differential equations
\begin{equation}
\label{2.7}
-3bP f''' + (bP' + 2bc'P)f'' + (d'P - dP' - h)f' + 2abP f = 0,  
\end{equation}
\begin{equation}
\label{2.8}
bP f^{(4)} - (bP' + 2bc'P)f''' - h f'' - 2abP f' + (4abc'P + 2abP' - 4a'bP)f = 0.  
\end{equation}

Differentiating \eqref{2.7}, and then eliminating \(f^{(4)}\) by using  \eqref{2.8}, we obtain
\begin{equation}
\label{2.9}
C_3 f''' + C_2 f'' + C_1 f' + C_0 f = 0, 
\end{equation}
where
\begin{equation}
\begin{aligned}
\label{2.10}
C_3 &= -4bc'P - 3b'P - 5bP', \\
C_2 &= bP'' + 4b'c'P + 4bc''P + b''P - 4h, \\
C_1 &= -4abP + d''P - dP'' - h', \\
C_0 &= 12abc'P - 10a'bP + 2ab'P + 8abP'.
\end{aligned}  
\end{equation}

By taking \(C_3 \times \eqref{2.7} + 3bP \times \eqref{2.9}\) to eliminate \(f'''\), we obtain
\begin{equation}
\label{2.11}
Q_2 f'' + Q_1 f' + Q_0 f = 0, 
\end{equation}
where
\begin{equation}
\begin{aligned}
\label{2.12}
Q_2 &= (bP' + 2bc'P)C_3 + 3bPC_2, \\
Q_1 &= (d'P - dP' - h)C_3 + 3bPC_1, \\
Q_0 &= 2abPC_3 + 3bPC_0 \\
&= 28ab^2c'P^2 + 14ab^2PP' - 30a'b^2P^2.
\end{aligned}  
\end{equation}
Here $Q_0 \neq 0.$
In fact, if \(Q_0 \equiv 0\), then $14c' = 15\frac{a'}{a} - 7\frac{P'}{P} $ implies $ e^{14c} = c_2\frac{a^{15}}{P^7}$, where $c_2$ is a constant, which   contradicts the fact that \(c\) is non-constant.

Differentiating \eqref{2.11} and eliminating \(f'''\) by using \eqref{2.7}, we get
\begin{equation}
\label{2.13}
B_2 f'' + B_1 f' + B_0 f = 0, 
\end{equation}
where
\begin{equation}
\begin{aligned}
\label{2.14}
B_2 &= 3bP(Q'_2 + Q_1) + (bP' + 2bc'P)Q_2, \\
B_1 &= 3bP(Q'_1 + Q_0) + (d'P - dP' - h)Q_2, \\
B_0 &= 3bP Q'_0 + 2abP Q_2.
\end{aligned} 
\end{equation}
By \eqref{2.11} and \eqref{2.13}, we can eliminate \(f''\) ,  then we obtain
\begin{equation}
\label{2.15}
u f' = v f,
\end{equation}
where 
\begin{equation}
\begin{aligned}
\label{2.16}
u &=Q_1 B_2 - Q_2 B_1\\
&= 3bP Q_1(Q'_2 + Q_1) + (bP' + 2bc'P)Q_1 Q_2 - 3bP Q_2(Q'_1 + Q_0) - (d'P - dP' - h)Q_2^2, \\
v &=Q_2 B_0 - Q_0 B_2\\
&= Q_2(3bP Q'_0 + 2abP Q_2) - 3bP Q_0(Q'_2 + Q_1) - (bP' + 2bc'P)Q_0 Q_2.  
\end{aligned}
\end{equation}
From \eqref{2.16}, we get $T(r,u)+T(r,v)=S(r,f)$.  

If $uv\not\equiv0$, then $\frac{f'}{f}=\frac{v}{u}$ implies $\overline{N}(r,\frac{1}{f})=S(r,f)$. While the multiple zeros of \(f\) must zero of $b$   by \eqref{1.1}.  Thus 
${N}(r,\frac{1}{f})=S(r,f)$. The first fundamental theorem, \eqref{1.1} and the logarithmic derivative lemma give
\begin{equation*}
\begin{aligned}
2T(r, f) &= T(r, 1/f^2) + O(1) \\
&\le m(r, e^{-2c}) + m\left(r, \frac{f'}{f}\right) + m\left(r, \frac{f''}{f}\right) + S(r, f) \\
&\le O(r^m) + S(r, f).
\end{aligned}  
\end{equation*}
We get $\rho(f) = m < \infty.$
Therefore,  ${N}(r,\frac{1}{f})=O(\log rT(r,f))=O(\log r)$. 
Then the Hadamard factorization gives
$f = R_0 e^W,$ where  $  R_0, W $  are nonzero polynomials. 
Substituting this into \eqref{1.1}, we obtain
\begin{equation}
\label{2.17}
\Phi(z) e^{2W} = b e^{2c},  
\end{equation}
where
\[
\Phi = (R'_0 + R_0 W')(R''_0 + 2R'_0 W' + R_0 W'' + R_0 (W')^2) - a R_0^2
\]
is a nonzero polynomial. Therefore \(e^{2(W-c)} = b/\Phi\) is a rational function.  Hence \(W - c\) is a constant. Absorbing this constant into \(R_0\), we obtain the conclusion (I).

We continue to consider $uv\equiv 0$. First, \(Q_2 \not\equiv 0\). If \(Q_2 \equiv 0\), from
\[
u = Q_1 B_2 - Q_2 B_1
\]
and
\[
B_2 = 3bP(Q'_2 + Q_1) + (bP' + 2bc'P)Q_2,
\]
we obtain \(u = 3bP Q_1^2=0\), so \(Q_1 \equiv 0\); then from \eqref{2.11} and \(Q_0 \neq 0\), we get \(f \equiv 0\), a contradiction. $v=0$, thus \eqref{2.16} gives 
\begin{align}
\label{2.18}
0 = 2abP Q^2_2 + [3bP Q'_0 - (bP' + 2bc'P)Q_0]Q_2 -3bP Q_0 Q'_2 - 3bP Q_0 Q_1. 
\end{align}

From \eqref{2.10} and \eqref{2.12},  we obtain
\begin{equation}
\begin{aligned}
\label{2.19}
Q_2 &= -12bP h + \alpha_1, \\
Q_1 &= (4bc'P + 3b'P + 5bP')h - 3bP h' + \beta_2.
\end{aligned}  
\end{equation}
where  
$\alpha_1 = 3bP(bP'' + 4b'c'P + 4bc''P + b''P)  - (bP' + 2bc'P)(4bc'P + 3b'P + 5bP')$ and
$\beta_2 = - (d'P - dP')(4bc'P + 3b'P + 5bP') + 3bP(-4abP + d''P - dP'').$
\eqref{2.18} and \eqref{2.19} implies
\begin{equation}
\label{2.20}
288ab^3P^3h^2 + U_1h' + U_2h + U_3 = 0,  
\end{equation}
where
$U_1 = 45b^2P^2Q_0, $ 
$U_2 = -48ab^2P^2\alpha_1 - 36b^2P^2Q'_0    + 3bP Q_0(11bP' + 9b'P + 4bc'P),$  
$U_3= 2abP\alpha_1^2 + \left[3bP Q'_0 - (bP' + 2bc'P)Q_0\right]\alpha_1 
 - 3bP Q_0(\alpha'_1 + \beta_2). $
By clunie lemma, $\eqref{2.20}$ and $N(r,h)=O(\log r)$,  we get $h$ is  a rational function. 

Thus \(Q_0, Q_1, Q_2\) are all rational functions. 
 \eqref{2.11} implies
\begin{equation}
\label{2.21}
f'' = p f' + q f,
\end{equation}
where 
$p = -\frac{Q_1}{Q_2}, q = -\frac{Q_0}{Q_2} .$

By   \eqref{2.21},  \eqref{1.1} can be written as
\begin{equation}
\label{2.22}
be^{2c}= f'f'' - af^2 = p(f')^2 + qff' - af^2.  
\end{equation}
Differentiating \eqref{2.22}, and then using \(f'' = pf' + qf\) and $\eqref{1.1}$, we obtain
\begin{equation}
\label{2.23}
A_0(f')^2 + A_1ff' + A_2f^2 = 0,  
\end{equation}
where
\begin{equation*}
\begin{aligned}
A_0 &= p' + 2p^2 + q - \sigma p, \\
A_1 &= q' + 3pq - 2a - \sigma q, \\
A_2 &= q^2 - a' + \sigma a,\\
\sigma &= \frac{b'}{b} + 2c'.
\end{aligned}  
\end{equation*}

If \(A_0, A_1, A_2\) are not all identically zero, then $N(r,\frac{1}{f})=O(\log r)$. Hence
\[
f = R_0 e^W,
\]
where    $R_0, W  $ are polynomial.
Repeating the argument as \eqref{2.17}, we obtain  conclusion (I).

Next, we consider \(A_0, A_1, A_2\) are   all identically zero, that is 

\begin{equation}
\label{2.24}
\begin{aligned}
0 &= p' + 2p^2 + q - \sigma p, \\
0 &= q' + 3pq - 2a - \sigma q, \\
0 &= q^2 - a' + \sigma a.\\
\end{aligned}  
\end{equation}
Note that \(p \neq 0\), otherwise the first equation gives \(q \equiv 0\), which contradicts \(q = -Q_0/Q_2 \neq 0\).

Define
\begin{equation}
\label{2.25}
S = -\frac{q}{p}, \qquad X = \frac{q^2 + 4ap}{p^2}.
\end{equation}
From the definition it immediately follows that
\begin{equation}
\label{2.26}
q = -pS, \qquad a = \frac{p}{4}(X - S^2).  
\end{equation}
From the first equation of \eqref{2.24} and \(q = -pS\), we obtain
\begin{equation}
\label{2.27}
\sigma = \frac{p'}{p} + 2p - S.  
\end{equation}
Substituting \eqref{2.26} into the second equation of \eqref{2.24}:
\[
-p'S - pS' - 3p^2 S - \frac{p}{2}(X - S^2) = -\sigma p S.
\]
Dividing by \(p\), and then using \eqref{2.27},   we get
\begin{equation}
\label{2.28}
S' = -pS - \frac{S^2 + X}{2}. 
\end{equation}
On the other hand, substituting \eqref{2.26} into the third equation of \eqref{2.23}, and differentiating \(a = p(X - S^2)/4\), we obtain

\[
4pS^2 - \frac{p'}{p}(X - S^2) - X' + 2SS' + \sigma(X - S^2) = 0.
\]
Then simplifying by using \eqref{2.27} and \eqref{2.28}, we obtain
\begin{equation}
\label{2.29}
X' = 2(p - S)X.  
\end{equation}
Finally, from \eqref{2.29}, \eqref{2.28}, and \(8a = 2p(X - S^2)\), direct calculation yields
\begin{equation}
\label{2.30}
8a = 2SS' + X' + S^3 + 3SX.  
\end{equation}
  In the following, we divide into two cases according to \(X \not \equiv 0\) and \(X \equiv 0\).

\textbf{Case 1 \(X \not \equiv 0\).}   From \eqref{2.29} we get \[
p = S + \frac{X'}{2X}.
\]
Substituting  it into  \eqref{2.28},  we obtain
\begin{equation}
\label{2.31}
S X' + (2S' + 3S^2)X + X^2 = 0.  
\end{equation}
Then from \eqref{2.29} and \eqref{2.27},
\[
\sigma - S = \frac{p'}{p} + 2p - 2S = \frac{(pX)'}{pX}.
\]
Substituting further
\[
\sigma = \frac{b'}{b} + 2c',
\]
we have
\begin{align*}
S &= 2c' + \frac{b'}{b} - \frac{(pX)'}{pX} \\
&= 2c' + \frac{b'}{b} - \frac{p'}{p} - \frac{X'}{X}.
\end{align*}
By 
\[
\frac{b'}{b} - \frac{p'}{p} - \frac{X'}{X} = \frac{\left(\frac{b}{pX}\right)'}{\frac{b}{pX}}.
\]
  we   obtain
\begin{equation}
\label{2.32}
S = 2c' + \frac{G'}{G},
\end{equation}
where  $G = \frac{b}{pX}$.

Identities  \eqref{2.30}, \eqref{2.31}, \eqref{2.32} exactly satisfy all the conditions of Lemma \ref{l1}. Hence there exists  nonzero constant $s$, such that
\begin{equation*}
S = s, \quad X = -3s^2, \quad c' = \frac{s}{2}
\end{equation*}
Since \(X\) is a nonzero constant, \eqref{2.29} gives \(p = S = s\). Then from \eqref{2.25} and \eqref{2.26},
\begin{equation*}
q = -s^2, \quad a = -s^3, \quad \sigma = s. 
\end{equation*}
Also since \(G = b/(pX)\) and \(G, p, X\) are all nonzero constants, \(b\) is also a nonzero constant $\mu$. Let \(\lambda = s/2\), then  
\begin{equation*}
c(z) = \lambda z \quad b \equiv \mu, \quad a = -8\lambda^3, \quad p = 2\lambda, \quad q = -4\lambda^2.  
\end{equation*}

\eqref{2.21} can rewrite as  \[
f'' = 2\lambda f' - 4\lambda^2 f.
\]
Its characteristic equation and two distinct characteristic roots are
\[
t^2 - 2\lambda t + 4\lambda^2 = 0,\qquad t_{1,2} = (1 \pm i\sqrt{3})\lambda.
\]
Therefore
\[
f = \alpha e^{t_1 z} + \beta e^{t_2 z} \qquad (\alpha,\beta \in \mathbb{C}).
\]
Substituting it into \eqref{2.22},

$\mu e^{2\lambda z} = 2\lambda \alpha^2 (t_1^2 - 2\lambda t_1 + 4\lambda^2) e^{2t_1 z} + 2\lambda \beta^2 (t_2^2 - 2\lambda t_2 + 4\lambda^2) e^{2t_2 z} + 4\lambda \alpha\beta (t_1 t_2 - \lambda(t_1+t_2) + 4\lambda^2) e^{(t_1+t_2)z}$. 
By
$t_i^2 - 2\lambda t_i + 4\lambda^2 = 0$, we have $\mu e^{2\lambda z}=4\lambda \alpha\beta (t_1 t_2 - \lambda(t_1+t_2) + 4\lambda^2) e^{(t_1+t_2)z}$.
Substituting  \(t_1 + t_2 = 2\lambda\) and \(t_1 t_2 = 4\lambda^2\) into this equation, we get
\[
\mu e^{2\lambda z} = 24\alpha\beta\lambda^3e^{2\lambda z}.
\]
Thus, 
$\mu = 24\alpha\beta\lambda^3.$
Since \(\mu \neq 0\), we must have \(\alpha\beta \neq 0\). This is conclusion  (III).

\textbf{Case 2 \(X\equiv 0\).}
 Define 
\begin{equation}
r = \frac{q}{2p}. 
\end{equation}
Since \(S = -q/p = -2r\), from \(X = 0\) and \eqref{2.26} we get \(q = 2pr\), \(a = -pr^2\), and \eqref{2.22} imples
\[
be^{2c}=p(f' + rf)^2.
\]
Because \(a \neq 0\), we have \(r \neq 0\). Substituting \(S = -2r\), \(X = 0\) into \eqref{2.28}, we obtain
\begin{equation}
\label{3.35}
p = r - \frac{r'}{r}, \qquad a = rr' - r^3. 
\end{equation}

Then from \eqref{2.27} we   obtaining
\begin{equation}
\label{3.36}
\sigma = 4r + \frac{H'}{H}.  
\end{equation}
where 
$H = \frac{r^2 - r'}{r^3}.$ Let \(r_0\) be the polynomial part of \(r\). Comparing the polynomial parts of \(\sigma = b'/b + 2c'\) and \eqref{3.36}, we get
$c' = 2r_0.$
 
Thus we obtain 
\begin{equation}
\label{3.37}
\frac{b'}{b} = 4(r - r_0) + \frac{H'}{H}.
\end{equation}
If \(z_0\) is a zero of \(r\) of multiplicity \(m\), then
\[
\operatorname{ord}_{z_0}(r^2 - r') = m - 1, \qquad \operatorname{ord}_{z_0} H = -(2m + 1).
\]
At \(z_0\), \(r-r_0\) is analytic, while the residue of \(H'/H\) is \(-(2m+1)<0\). Hence the residue of the right-hand side of \eqref{3.37} at \(z_0\) is a negative integer. But \(b\) is a nonzero polynomial, and the residue of \(b'/b\) at any finite point can only be the multiplicity of a zero of \(b\), and therefore must be a nonnegative integer, a contradiction. Thus \(r\) has no finite zeros.

Write \(r=P_1/P_2\) as a reduced rational expression. Having no finite zeros means that \(P_1\) is a nonzero constant; while  \(r_0\not\equiv0\), i.e. \(\deg P_1\ge\deg P_2\). Therefore \(P_2\) is also constant, so \(r\) is a nonzero constant. Let
\begin{equation*}
\lambda = 2r.  
\end{equation*}

From \eqref{3.35}, \eqref{3.36}, and \eqref{3.37},
\begin{equation}
c(z) = \lambda z, \quad b \equiv \mu, \quad p = \frac{\lambda}{2}, \quad q = \frac{\lambda^2}{2}, \quad a = -\frac{\lambda^3}{8}. 
\end{equation}
At this point, \eqref{2.21} is 
\[
f'' = \frac{\lambda}{2} f' + \frac{\lambda^2}{2} f.
\]
Its characteristic equation is
\[
t^2 - \frac{\lambda}{2} t - \frac{\lambda^2}{2} = 0,
\]
and the characteristic roots are \(\lambda\) and \(-\lambda/2\), so
\[
f =    \alpha e^{\lambda z} + \beta e^{-\lambda z/2}.
\]
\eqref{2.22} gives 
\[
\mu e^{2\lambda z}= p(f' + rf)^2 = \frac{\lambda}{2} \left( f' + \frac{\lambda}{2} f \right)^2.
\]
We get
$\mu = \frac{9}{8}\alpha^2\lambda^3.$
Therefore \(\alpha \neq 0\). If \(\beta = 0\), this is conclusion (I); if \(\beta \neq 0\), it is conclusion (II).  

\qed

	\section*{Declarations}
	\begin{itemize}
		\item \noindent{\bf Funding}
		This research work was supported by Graduate Research Fund Project of Guizhou Province ((2025YJSKYJJ107)) and the National Natural Science Foundation of China (Grant No. 12261023, 11861023).
		
		\item \noindent{\bf Conflicts of Interest}
		The authors declare that there are no conflicts of interest regarding the publication of this paper.

        \item\noindent{\bf Author Contributions}
All authors contributed to the study conception and design. All authors read and approved the final manuscript.
	\end{itemize}

\end{document}